\newtheorem{theorem}{Theorem}
\newtheorem{lemma}[theorem]{Lemma}
\newtheorem{corollary}[theorem]{Corollary}
\def\fnum@figure#1{\figurename\nobreakspace\thefigure.}
\journal{Journal of XXX}
\begin{document}
\begin{frontmatter}


\title{ Quasi-Newton method of Optimization is proved to be a steepest descent method under the ellipsoid norm}


\author[TELECOM]{Jiongcheng Li\corref{cor}}
\cortext[cor]{Corresponding author}
\ead{115120196@qq.com}
\address[TELECOM]{China Telecom, Guangzhou, Guangdong, PR China}

\begin{abstract}
    Optimization problems, arise in many practical applications, from the view points of both theory and numerical methods. Especially, significant improvement in deep learning training came from the Quasi-Newton methods. Quasi-Newton search directions provide an attractive alternative to Newton's method in that they do not require computation of the Hessian and yet still attain a superlinear rate of convergence. In Quasi-Newton method,  we require Hessian approximation to satisfy  the secant equation.  In this paper,  the Classical Cauchy-Schwartz Inequality is introduced, then more generalization are proposed. And it is seriously proved that  Quasi-Newton method is a steepest descent method under the ellipsoid norm.
\end{abstract}

\begin{keyword}
    Optimization \sep Quasi-Newton Methods \sep Deep Learning 

\end{keyword}
\end{frontmatter}

\renewcommand{\theequation}{\arabic{section}.\arabic{equation}}
\renewcommand{\thetable}{\arabic{section}.\arabic{table}}
\renewcommand{\figurename}{Fig.\hspace{-0.2em}}

\section{Introduction}
\label{S1}
\setcounter{equation}{0}
\setcounter{table}{0}

    Optimization is an important tool in decision science. Optimization problems, arise in many practical applications. Especially, signiﬁcant improvement in deep learning training came from the Quasi-Newton methods. The steepest descent method is one of the simplest and the most fundamental minimization methods for unconstrained optimization. The method is a line search method that moves along  $ - \nabla f _k $ at every step. 
One advantage of the steepest descent direction is that it requires calculation of the gradient but not of second derivatives. However, it can be excruciatingly slow on difﬁcult problems. Quasi-Newton methods, like steepest descent, require only the gradient of the objective function to be supplied at each iteration.
Quasi-Newton search directions provide an attractive alternative to Newton's method in that they do not require computation of the Hessian and yet still attain a superlinear rate of convergence. In Quasi-Newton method,  we require Hessian approximation to satisfy  the secant equation.
In this paper,  the Classical Cauchy-Schwartz Inequality is introduced, then more generalization are proposed. And it is seriously proved that  Quasi-Newton method is a steepest descent method under the ellipsoid norm.

\textbf{Classical Cauchy-Schwartz Inequality}:  if x and y are vectors in $ R ^n $, then
$ \left| x ^T y \right| \le \left| \left| x \right| \right| _2 \left| \left| y \right| \right| _2 $
, the equality holds if and only if x and y are linearly dependent.

\section{Make generalizations about Cauchy-Schwartz Inequality}
\label{S2}
\setcounter{equation}{0}
\setcounter{table}{0}

\begin{lemma}
    $ \forall x > 0 $, $ ln \left( x \right) \le x - 1 $ , the equality holds if and only if $ x = 1 $.
\end{lemma}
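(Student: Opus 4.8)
The plan is to prove the inequality $\ln(x) \le x - 1$ for all $x > 0$, with equality iff $x = 1$. This is a standard result and I would prove it by a single-variable calculus argument. Define the auxiliary function $g(x) = x - 1 - \ln(x)$ on the domain $(0, \infty)$; the claim is equivalent to showing $g(x) \ge 0$ everywhere, with equality only at $x = 1$.

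First I would differentiate to locate the critical point. Since $g'(x) = 1 - \tfrac{1}{x}$, setting $g'(x) = 0$ gives the unique critical point $x = 1$. I would then observe the sign of $g'$: for $0 < x < 1$ we have $g'(x) < 0$ (so $g$ is strictly decreasing), and for $x > 1$ we have $g'(x) > 0$ (so $g$ is strictly increasing). Hence $g$ attains a global minimum at $x = 1$.

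Next I would evaluate the minimum value: $g(1) = 1 - 1 - \ln(1) = 0$. Combining this with the monotonicity just established, $g(x) > 0 = g(1)$ for every $x \ne 1$ and $g(1) = 0$, which is exactly the assertion that $\ln(x) \le x - 1$ with equality precisely at $x = 1$. A clean alternative would be to invoke the strict convexity of $g$ (since $g''(x) = 1/x^2 > 0$), which guarantees that the unique critical point is the global minimizer without needing the separate sign analysis of $g'$.

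There is no real obstacle here — the argument is elementary and self-contained. The only point requiring a sentence of care is the strictness of the equality claim, which follows because $g'$ is strictly negative on $(0,1)$ and strictly positive on $(1,\infty)$ (equivalently, $g''>0$ makes the minimum strict), ruling out any second point where $g$ vanishes. I would present the monotonicity-based version as the primary proof since it is the most transparent.
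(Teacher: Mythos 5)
Your proof is correct and takes essentially the same route as the paper: the same auxiliary function $x - 1 - \ln(x)$, the same derivative $1 - \tfrac{1}{x}$, and the same sign analysis on $(0,1)$ and $(1,\infty)$ to conclude a global minimum of $0$ at $x=1$. In fact your write-up is more complete than the paper's, which leaves the link from the sign of the derivative to the strict inequality (and to the equality case) implicit and leans on a figure; your explicit monotonicity argument and the convexity remark close that small gap.
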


\begin{proof}
    Let $ f \left( x \right) = \left( x - 1 \right) - ln \left( x \right) $, we have $ f \left( 1 \right) = 0, f' \left( x \right) = 1 - \frac {1} {x} $.

    (1) During $ x > 1 $: $ f' \left( x \right) > 0 $, hence $ x - 1 > ln \left( x \right) $.

    (2) During $ x < 1 $: $ f' \left( x \right) < 0 $, hence $ x - 1 > ln \left( x \right) $. 

    \begin{figure}[H]
        \begin{center}
        \includegraphics[width=10.0cm]{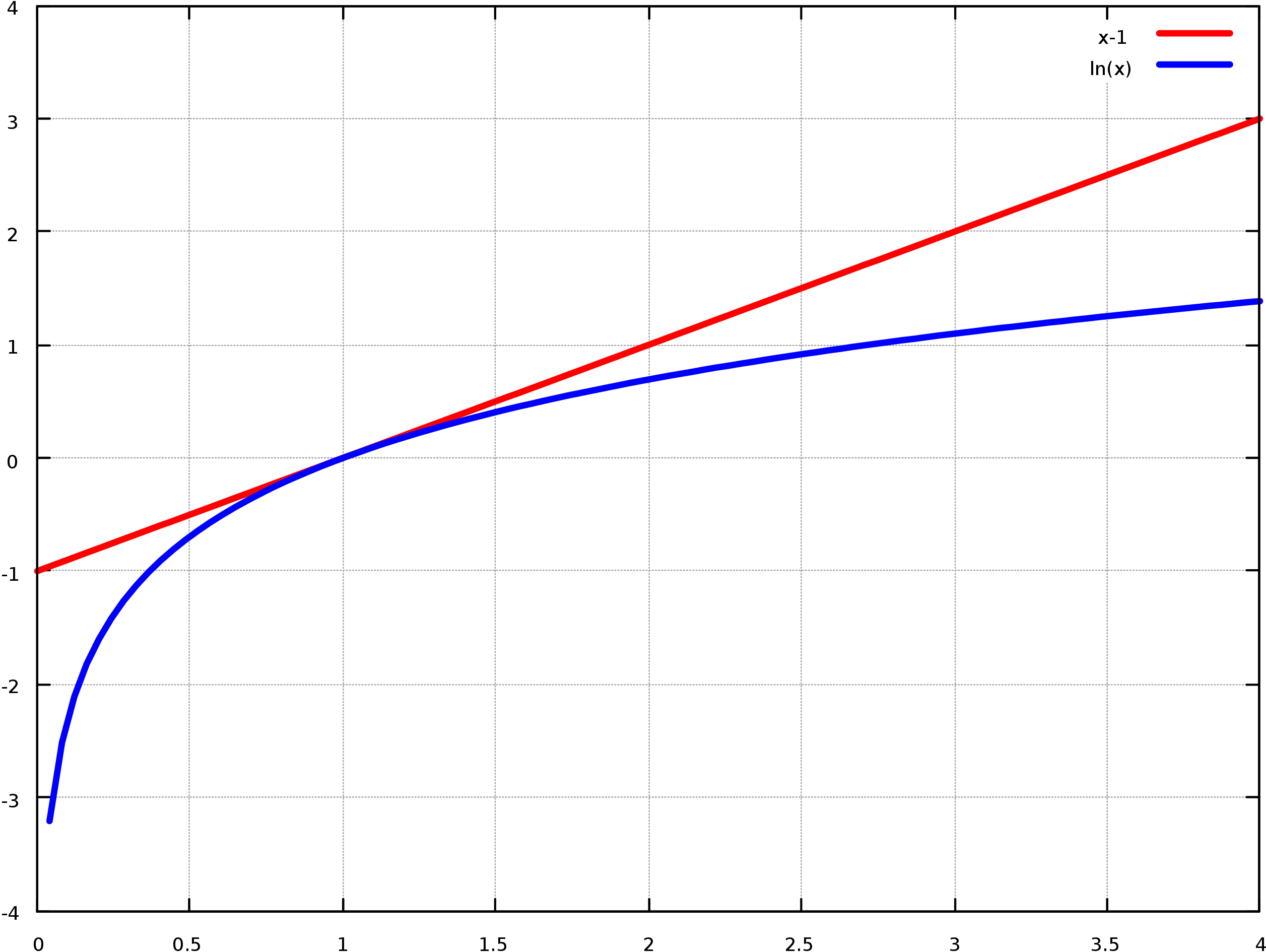}\\
            \caption{x-1 and ln(x)}\label{Fig1}
        \end{center}
    \end{figure}

\end{proof}

You can reconstruct the figure by the gnuplot command \cite{Philipp2016}:

\textbf{gnuplot -p -e \texttt{"} set xrange [0:4]; set yrange [-4:4]; set grid; plot x-1 lc rgb `red` lw 3, log(x) lc rgb `blue` lw 3 title `ln(x)` \texttt{"}}

when you're using gnuplot's built-in functions, log(x) refers to the natural logarithm; you must use log10(x) for the logarithm to base 10.

\begin{lemma}
    (The Weighted Arithmetic Mean–Geometric Mean Inequality) Let $ a_1, \cdots, a_n $ be positive real numbers, $ \delta _1, \cdots, \delta _n $ be positive weights with $ \delta _1 + \cdots + \delta _n = 1 $, then 
    $ a_1 ^{\delta _1} \cdots a_n ^{\delta _n} \le \delta _1 a_1 + \cdots + \delta _n a _n $, 
    and equality holds if and only if all $ a _i $ are equal.
\end{lemma}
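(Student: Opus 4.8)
The plan is to derive the weighted AM--GM inequality directly from the previous lemma ($\ln(x) \le x - 1$ for all $x > 0$, with equality iff $x = 1$), which already captures the essential concavity of the logarithm in exactly the form I need. First I would introduce the abbreviation $A = \delta_1 a_1 + \cdots + \delta_n a_n$ for the weighted arithmetic mean on the right-hand side. Since every $a_i > 0$ and every $\delta_i > 0$, we have $A > 0$, so each ratio $a_i/A$ is a legitimate positive argument for the lemma.

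Next, for each index $i$ I would apply the lemma to $x = a_i/A$, obtaining $\ln(a_i/A) \le a_i/A - 1$. Multiplying by the positive weight $\delta_i$ and summing over $i$ gives $\sum_{i=1}^n \delta_i \ln(a_i/A) \le \sum_{i=1}^n \delta_i(a_i/A - 1)$. The right-hand side collapses: using $\sum_i \delta_i = 1$ it equals $\frac{1}{A}\sum_i \delta_i a_i - \sum_i \delta_i = \frac{A}{A} - 1 = 0$. Hence $\sum_i \delta_i \ln a_i - \ln A \le 0$, which rearranges to $\ln\bigl(\prod_{i=1}^n a_i^{\delta_i}\bigr) = \sum_i \delta_i \ln a_i \le \ln A$. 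Exponentiating (the map $\exp$ being strictly increasing) then yields $\prod_i a_i^{\delta_i} \le A$, which is precisely the claimed inequality.

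For the equality characterization, I would trace the single source of slack back through the argument: the only inequality used is the lemma applied at each $x = a_i/A$, and these are combined with strictly positive coefficients $\delta_i$. Therefore equality in the summed bound holds if and only if equality holds in every individual application, i.e.\ if and only if $a_i/A = 1$ for each $i$. This forces $a_i = A$ for all $i$, so the $a_i$ all coincide, their common value being the weighted mean $A$. I expect no serious obstacle in the main inequality; the one step deserving care is the bookkeeping of the equality condition across the weighted sum, where positivity of the weights $\delta_i$ is exactly what allows me to conclude termwise equality from equality of the totals.
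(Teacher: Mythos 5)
Your proposal is correct and follows essentially the same route as the paper: apply the bound $\ln(x) \le x-1$ at $x = a_i/R$ (your $A$ is the paper's $R$), take the weighted sum so the right-hand side collapses to zero, and exponentiate. If anything, your treatment of the equality case is slightly more complete than the paper's, since you explicitly use positivity of the weights $\delta_i$ to deduce termwise equality $a_i = A$ (the "only if" direction), whereas the paper only verifies that equal $a_i$ yield equality.
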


\begin{proof}
    Let
    \begin{equation}
        R = \sum _{i = 1} ^n \delta _i a _i
    \end{equation}

    It follows immediately from the lemma 1 that
    \begin{equation}
        \forall i, \frac {a _i} {R} - 1 - ln \left( \frac {a _i } {R} \right)  \ge 0
    \end{equation}

    Multiplying every item by $ \delta _i $, and taking the sum over $ i $ yields:
    \begin{equation}
        \sum _{i=1} ^n \left( \frac {\delta _i a _i} {R} - \delta _i \right) - \sum _{i = 1} ^n \delta _i ln \left( \frac {a _i} {R} \right) \ge 0
    \end{equation}

    It follows from (2.1) that $ \sum _{i = 1} ^n \frac {\delta _i a _i} {R} = 1 $, and with known conditions we obtain $ \sum _{i = 1} ^n \delta _i = 1 $.
    Using (2.3), we have :
    \begin{equation}
        \sum _{i = 1} ^n \delta _i ln \left( \frac {a _i} {R} \right) \le 0
    \end{equation}
    \begin{equation*}
        exp \left[ \sum _{i = 1} ^n \delta _i ln \left( \frac {a _i} {R} \right) \right] \le exp \left( 0 \right) = 1
    \end{equation*}
    \begin{equation*}
        exp \left[ \sum _{i = 1} ^n ln \left( \frac {a _i ^{\delta _i} } {R ^{\delta _i} } \right) \right] \le 1
    \end{equation*}
    \begin{equation*}
        exp \left[ ln \left( \frac {a _1 ^{\delta _1} \cdots a _n ^{\delta _n}} {R ^{\delta _1} \cdots R ^{\delta _n}} \right) \right]  \le 1
    \end{equation*}

    Therefore, 
    \begin{equation*}
        \frac {a _1 ^{\delta _1} \cdots a _n ^{\delta _n}} {R} \le 1 
    \end{equation*}
    . So,
    \begin{equation}
        a _1 ^{\delta _1} \cdots a _n ^{\delta _n} \le \delta  _1 a _1 + \cdots + \delta _n a _n
    \end{equation}

    The equality of (2.5) holds if and only if  equality of (2.4) holds. When $ a _1 = \cdots = a _n $, $ R = a  _1 $, $ \forall i, \frac {a _i} {R} = 1 $, where  equality of (2.4) holds.
\end{proof}

\begin{corollary}
    The Arithmetic Mean–Geometric Mean Inequality follows from Lemma 2 by setting 
    $ \delta _1 = \cdots = \delta _n = \frac {1} {n} $. Furthermore, taking $ n=2 $, we have "Geometric mean not greater than arithmetic mean", which can be found in elementary mathematics.
\end{corollary}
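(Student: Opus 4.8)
The plan is to obtain the classical (unweighted) Arithmetic Mean--Geometric Mean Inequality as a direct specialization of the weighted form already established in Lemma 2, and then to read off the case $n=2$. First I would verify that the uniform choice $\delta_1 = \cdots = \delta_n = \tfrac{1}{n}$ is an admissible system of weights for Lemma 2: each $\delta_i = 1/n$ is positive, and $\sum_{i=1}^n \delta_i = n \cdot \tfrac{1}{n} = 1$, so the hypotheses of Lemma 2 are met and its conclusion applies verbatim with no additional work.

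Next I would substitute these weights into the inequality $a_1^{\delta_1} \cdots a_n^{\delta_n} \le \delta_1 a_1 + \cdots + \delta_n a_n$. On the left-hand side the common exponent allows me to collapse the product of powers into a single radical,
\begin{equation*}
a_1^{1/n} \cdots a_n^{1/n} = \left( a_1 \cdots a_n \right)^{1/n} = \sqrt[n]{a_1 \cdots a_n},
\end{equation*}
while on the right-hand side the common weight factors out. This yields exactly
\begin{equation*}
\sqrt[n]{a_1 \cdots a_n} \le \frac{a_1 + \cdots + a_n}{n},
\end{equation*}
which is the Arithmetic Mean--Geometric Mean Inequality.

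For the equality statement I would simply invoke the corresponding clause of Lemma 2: equality holds precisely when all the $a_i$ are equal, and this condition transfers unchanged under the specialization since the substitution does not alter when the underlying inequality is tight. Finally, putting $n = 2$ gives $\sqrt{a_1 a_2} \le \tfrac{1}{2}\left( a_1 + a_2 \right)$, the familiar elementary fact that the geometric mean does not exceed the arithmetic mean.

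Since every step is a substitution or an elementary rewriting of exponents, I do not anticipate any genuine obstacle in this corollary. The only point requiring a moment's care is the law-of-exponents manipulation $a_1^{1/n} \cdots a_n^{1/n} = \left( a_1 \cdots a_n \right)^{1/n}$, which is legitimate precisely because all the $a_i$ are positive, so that the real $n$-th root and the fractional powers are unambiguously defined.
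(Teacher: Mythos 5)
Your proposal is correct and matches the paper's intent exactly: the paper states this corollary without a separate proof, precisely because the argument is the direct substitution $\delta_1 = \cdots = \delta_n = \tfrac{1}{n}$ into Lemma 2 that you carry out, followed by the specialization $n=2$. Your added care about the exponent manipulation and the equality case fills in the routine details the paper leaves implicit, with no deviation in approach.
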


\begin{lemma} 
        (Young inequality) Assume that real numbers p and q are each larger than 1, and $ \frac 1 p + \frac 1 q = 1 $ .  If x and y are also real numbers, then:

    $ x y \le \frac {x ^p} p + \frac {y ^q} q $, and equality holds if and only if $ x ^p = y ^q $.
\end{lemma}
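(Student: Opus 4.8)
The plan is to obtain Young's inequality as a direct specialization of the Weighted Arithmetic Mean--Geometric Mean Inequality (Lemma 2) to the case $ n = 2 $. The key observation is that the exponents $ p $ and $ q $, through the reciprocal constraint $ \frac{1}{p} + \frac{1}{q} = 1 $, supply exactly the pair of weights required by Lemma 2, so no new analytic work is needed.

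First I would set $ \delta_1 = \frac{1}{p} $ and $ \delta_2 = \frac{1}{q} $. Since $ p, q > 1 $, both weights lie in $ (0,1) $ and are strictly positive, and by hypothesis $ \delta_1 + \delta_2 = 1 $, so they form an admissible system of weights for Lemma 2. Next I would choose $ a_1 = x^p $ and $ a_2 = y^q $. Applying Lemma 2 with these choices produces on the left-hand side
\begin{equation*}
a_1^{\delta_1} a_2^{\delta_2} = \left( x^p \right)^{1/p} \left( y^q \right)^{1/q} = x y,
\end{equation*}
while the right-hand side becomes $ \delta_1 a_1 + \delta_2 a_2 = \frac{x^p}{p} + \frac{y^q}{q} $. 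Combining the two sides yields precisely the asserted inequality $ xy \le \frac{x^p}{p} + \frac{y^q}{q} $.

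For the equality clause I would simply transcribe the equality condition of Lemma 2: equality holds if and only if $ a_1 = a_2 $, that is, if and only if $ x^p = y^q $, exactly as stated.

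The main obstacle here is not analytic but a matter of domain. Lemma 2 requires the $ a_i $ to be positive real numbers, so the substitution $ a_1 = x^p $, $ a_2 = y^q $ is genuinely meaningful only when $ x, y > 0 $; indeed, for non-integer $ p $ the expression $ x^p $ need not even be a real number when $ x < 0 $. I would therefore interpret the hypothesis as $ x, y > 0 $ (or as $ x, y \ge 0 $ after treating the boundary cases $ x = 0 $ or $ y = 0 $ separately, where the left-hand side vanishes and the inequality is trivial). Apart from this bookkeeping about the admissible values of $ x $ and $ y $, the proof reduces entirely to the single application of Lemma 2 described above.
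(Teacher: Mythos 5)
Your proposal is correct and follows essentially the same route as the paper: both specialize Lemma 2 with weights $ \delta_1 = \frac{1}{p} $, $ \delta_2 = \frac{1}{q} $ and the substitutions $ a_1 = x^p $, $ a_2 = y^q $, and both read off the equality condition $ a_1 = a_2 $ directly. Your added remark about the domain --- that the substitution is only meaningful for $ x, y > 0 $ (with $ x=0 $ or $ y=0 $ handled trivially), since $ x^p $ need not be real for $ x < 0 $ and non-integer $ p $ --- is a genuine improvement in rigor over the paper, whose hypothesis ``$ x $ and $ y $ are real numbers'' silently ignores this issue.
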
 

\begin{proof}

    Setting $ \delta _1 = \frac 1 p, \delta _2 = \frac 1 q $ in the Weighted Arithmetic Mean–Geometric Mean Inequality, we obtain $ \delta _1 + \delta _2 = 1 $.

    Further setting $ a _1 = x ^p, a _2 = y ^q $, we have

    \begin{equation*}
        x y = a _1 ^{\frac 1 p} a _2 ^{\frac 1 q} \le \frac {a _1} p + \frac {a _2} q = \frac {x ^p} p + \frac {y ^q} q
    \end{equation*}

    The equality holds if and only if $ a _1 = a _2 $, that is to say, $ x ^p = y ^q . $

\end{proof}

\begin{corollary}
    It follows immediately from Young inequality  by setting $ x = \sqrt {a}, y = \sqrt {b}, p = q = 2 $ that "Geometric mean not greater than arithmetic mean", which can be found in elementary mathematics.
\end{corollary}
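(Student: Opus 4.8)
The plan is to obtain the two-variable geometric–arithmetic mean inequality as a direct specialization of the Young inequality established in Lemma 3, since the corollary's statement already dictates the substitution to be used. First I would fix nonnegative reals $a$ and $b$ and verify that the choice $x = \sqrt{a}$, $y = \sqrt{b}$, $p = q = 2$ is admissible: the exponents satisfy $p, q > 1$ and $\frac{1}{p} + \frac{1}{q} = \frac{1}{2} + \frac{1}{2} = 1$, so the hypotheses of Lemma 3 are met, and the square roots are well defined precisely because $a, b \ge 0$.

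Next I would substitute into the conclusion of Lemma 3, namely $xy \le \frac{x^p}{p} + \frac{y^q}{q}$, and simplify both sides. The right-hand side collapses to $\frac{(\sqrt{a})^2}{2} + \frac{(\sqrt{b})^2}{2} = \frac{a + b}{2}$, while the left-hand side becomes $\sqrt{a}\,\sqrt{b} = \sqrt{ab}$. This yields $\sqrt{ab} \le \frac{a+b}{2}$, which is exactly the assertion that the geometric mean does not exceed the arithmetic mean.

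Finally I would transport the equality characterization already supplied by Lemma 3, which gives equality if and only if $x^p = y^q$. Under the present substitution this reads $(\sqrt{a})^2 = (\sqrt{b})^2$, i.e. $a = b$, recovering the familiar equality condition. I expect there to be no genuine obstacle here, only one point deserving care in the bookkeeping: Young's inequality is phrased for arbitrary real $x, y$, whereas the square-root substitution restricts attention to $a, b \ge 0$. I would therefore state this domain restriction explicitly, so that $\sqrt{a}$, $\sqrt{b}$, and $\sqrt{ab}$ are all meaningful and the identity $\sqrt{a}\,\sqrt{b} = \sqrt{ab}$ is valid, after which the conclusion follows immediately.
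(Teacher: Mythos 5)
Your proposal is correct and follows exactly the route the paper intends: the corollary is stated without a separate proof precisely because the prescribed substitution $x=\sqrt{a}$, $y=\sqrt{b}$, $p=q=2$ into Lemma~4's inequality $xy \le \frac{x^p}{p}+\frac{y^q}{q}$ immediately yields $\sqrt{ab} \le \frac{a+b}{2}$ with equality iff $a=b$. Your added remark about restricting to $a,b \ge 0$ is a sensible bit of care that the paper leaves implicit.
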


\textbf{We will make two generalizations about Cauchy-Schwartz Inequality. One generalization takes account of power, the other one takes vector norm.}

\begin{theorem}
    (Hölder inequality) if x and y are vectors in $ R ^ n $, then
     
    \begin{equation*}
        \left| x ^T y \right| \le \left| \left| x \right| \right| _p \left| \left| y \right| \right| _q = \left( \sum _{i = 1} ^n \left| x _i \right| ^p \right) ^ {\frac 1 p}   \left( \sum _{i = 1} ^n \left| y _i \right| ^q \right) ^ {\frac 1 q} 
    \end{equation*}

    where p and q are real numbers larger than 1 and satisfy $ \frac 1 p + \frac 1 q = 1 $.
\end{theorem}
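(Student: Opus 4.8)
The plan is to reduce the Hölder inequality to the Young inequality (Lemma 3) by a normalization argument. First I would dispose of the degenerate case: if either $\|x\|_p = 0$ or $\|y\|_q = 0$, then the corresponding vector is the zero vector, so $x^T y = 0$ and both sides vanish, and the inequality holds trivially. Hence from now on I may assume $\|x\|_p > 0$ and $\|y\|_q > 0$.

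Next I would normalize by introducing the nonnegative scalars
$$ \hat{x}_i = \frac{|x_i|}{\|x\|_p}, \qquad \hat{y}_i = \frac{|y_i|}{\|y\|_q}, $$
which, by the definition of the $p$- and $q$-norms, satisfy $\sum_{i=1}^n \hat{x}_i^{\,p} = 1$ and $\sum_{i=1}^n \hat{y}_i^{\,q} = 1$. The purpose of this rescaling is to clear the denominators, so that the target inequality is equivalent to the cleaner statement $\sum_{i=1}^n \hat{x}_i \hat{y}_i \le 1$.

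The core step is to apply Young's inequality termwise. For each index $i$, since $\hat{x}_i, \hat{y}_i \ge 0$, Lemma 3 gives
$$ \hat{x}_i \hat{y}_i \le \frac{\hat{x}_i^{\,p}}{p} + \frac{\hat{y}_i^{\,q}}{q}. $$
Summing over $i$ and using $\sum_i \hat{x}_i^{\,p} = \sum_i \hat{y}_i^{\,q} = 1$ together with the conjugacy relation $\tfrac{1}{p} + \tfrac{1}{q} = 1$ yields $\sum_i \hat{x}_i \hat{y}_i \le \tfrac{1}{p} + \tfrac{1}{q} = 1$. Undoing the normalization, this is precisely $\sum_i |x_i|\,|y_i| \le \|x\|_p \|y\|_q$. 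Finally, the elementary bound $|x^T y| = \left| \sum_i x_i y_i \right| \le \sum_i |x_i|\,|y_i|$, which is just the triangle inequality for the absolute value, completes the chain and gives the theorem.

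The only point requiring care is the hypothesis of Lemma 3: as stated it speaks of arbitrary real numbers, but the AM-GM derivation behind it genuinely needs the arguments to be nonnegative, so that $x^p$ and $y^q$ are well defined for non-integer exponents. The normalization above is arranged precisely so that Young's inequality is only ever invoked on the nonnegative quantities $\hat{x}_i$ and $\hat{y}_i$, which sidesteps this issue entirely. If one wishes to record the equality condition as well, it follows by tracking the equality case $\hat{x}_i^{\,p} = \hat{y}_i^{\,q}$ of Lemma 3 at every index $i$, which unwinds to the statement that the vectors $(|x_i|^p)_i$ and $(|y_i|^q)_i$ are proportional.
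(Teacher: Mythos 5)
Your proof is correct and takes essentially the same approach as the paper: both arguments normalize by $\left|\left|x\right|\right|_p$ and $\left|\left|y\right|\right|_q$, apply Young's inequality (Lemma 3) termwise to the quantities $\frac{\left|x_i\right|}{\left|\left|x\right|\right|_p}\cdot\frac{\left|y_i\right|}{\left|\left|y\right|\right|_q}$, and sum over $i$ using $\frac{1}{p}+\frac{1}{q}=1$. Your explicit mention of the triangle-inequality step $\left|x^T y\right| \le \sum_i \left|x_i\right|\left|y_i\right|$ and of the nonnegativity needed in Lemma 3 merely spells out details the paper leaves implicit.
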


\begin{proof} 
    If $ x = 0 $ or $ y = 0 $, the result is obviously true. We assume that both x and y are not zero.

    When $ i \in 1, \cdots, n $,  from Young inequality, we have:
    \begin{equation}
        \frac {\left| x _i y _i  \right|} {\left| \left| x \right| \right| _p \left| \left| y \right| \right| _q}  = \frac {\left| x _i \right|} {\left| \left| x  \right| \right| _p}  \frac {\left| y _i \right|} {\left| \left| y  \right| \right| _q}   \le  \frac 1 p \frac {\left| x _i \right| ^p} {\left| \left| x \right| \right| _p ^p} + \frac 1 q \frac {\left| y _i \right| ^q} {\left| \left| y \right| \right| _q ^q}
    \end{equation}

    Taking the sum over i on both sides of the above inequality yields
    \begin{equation}
        \frac 1 {\left| \left| x \right| \right| _p \left| \left| y \right| \right| _q} \sum _{i = 1} ^n \left| x _i y _i \right|  \le \frac 1 {p \left| \left| x  \right| \right| _p ^p} \sum _{i = 1} ^n \left| x _i \right| ^p + \frac 1 {q \left| \left| y  \right| \right| _q ^q} \sum _{i = 1} ^n \left| y _i \right| ^q   = \frac 1 p + \frac 1 q = 1
    \end{equation}

    Multiplying $ \left| \left| x \right| \right| _p \left| \left| y \right| \right| _q $ on both sides of (2.7), we obtain the result. 
\end{proof}

\begin{corollary}
    (Cauchy-Schwartz Inequality) The Cauchy-Schwartz Inequality follows from Theorem 6 by setting $ p = q = 2 $.
\end{corollary}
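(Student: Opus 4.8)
The plan is to obtain the Cauchy-Schwartz inequality as an immediate specialization of the Hölder inequality established in Theorem 6, so the entire argument reduces to checking that the exponent pair $p=q=2$ is admissible and then simplifying the two norm factors. First I would verify the hypotheses of Theorem 6 for this choice: both $p=2$ and $q=2$ exceed $1$, and $\frac{1}{p}+\frac{1}{q}=\frac{1}{2}+\frac{1}{2}=1$, so $2$ is self-conjugate and Theorem 6 applies without modification.

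Next I would substitute $p=q=2$ into the conclusion of Theorem 6. For arbitrary vectors $x,y\in R^n$ this gives
\[
    \left| x^T y \right| \le \left( \sum_{i=1}^n \left| x_i \right|^2 \right)^{\frac{1}{2}} \left( \sum_{i=1}^n \left| y_i \right|^2 \right)^{\frac{1}{2}}.
\]
I would then note that, by the definition of the Euclidean ($2$-)norm, the two factors on the right are exactly $\left| \left| x \right| \right|_2$ and $\left| \left| y \right| \right|_2$, so the right-hand side collapses to $\left| \left| x \right| \right|_2 \left| \left| y \right| \right|_2$. This is precisely the classical Cauchy-Schwartz bound recalled in the introduction.

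Since the derivation is a pure substitution, I expect no real obstacle in the inequality itself; the only step that warrants any care is the equality clause. To recover it I would trace the equality case backward through the proof of Theorem 6: the Young-inequality equality condition $\left| x_i \right|^p = \left| y_i \right|^q$ of Lemma 4, applied to the normalized components $\left| x_i \right|/\left| \left| x \right| \right|_p$ and $\left| y_i \right|/\left| \left| y \right| \right|_q$, specializes at $p=q=2$ to proportionality of $\left| x_i \right|$ and $\left| y_i \right|$ across all $i$. Combining this with the sign consistency needed in the passage $\left| x^T y \right| \le \sum_{i=1}^n \left| x_i y_i \right|$ yields exactly the linear dependence of $x$ and $y$, which matches the equality condition of the classical statement and closes the corollary.
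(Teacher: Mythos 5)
Your proposal is correct and takes essentially the same approach as the paper: the paper offers no written proof for this corollary, treating it exactly as you do — as an immediate substitution of $p = q = 2$ into Theorem 6, with the two norm factors collapsing to $\left|\left| x \right|\right|_2 \left|\left| y \right|\right|_2$. Your extra step of tracing the equality case back through Young's inequality actually goes beyond the paper (Theorem 6 as stated carries no equality clause), and is a worthwhile supplement since the classical statement in the introduction does assert the linear-dependence condition.
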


\begin{corollary} 
    $ \left| \left| A ^T \right| \right| _2 = \left| \left| A \right| \right| _2 $
\end{corollary}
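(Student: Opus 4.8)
The plan is to prove the identity through the variational characterization of the spectral norm, so that the symmetry between $A$ and $A^T$ becomes manifest. Recall that $\| A \|_2 = \max_{\| x \|_2 = 1} \| A x \|_2$. The essential idea is to rewrite the Euclidean length $\| A x \|_2$ itself as a maximum of a bilinear form, thereby turning the one-sided quantity $\| A \|_2$ into a symmetric expression in two unit vectors, at which point transposition becomes a trivial relabelling.

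First I would establish the auxiliary identity $\| v \|_2 = \max_{\| y \|_2 = 1} y^T v$ for any fixed vector $v$. The inequality $y^T v \le \| v \|_2$ for every unit $y$ is exactly Corollary 7 (the Cauchy-Schwartz inequality just proved), and equality is attained by choosing $y = v / \| v \|_2$ when $v \ne 0$ (both sides vanish when $v = 0$). Applying this with $v = A x$ yields
\begin{equation*}
    \| A \|_2 = \max_{\| x \|_2 = 1} \| A x \|_2 = \max_{\| x \|_2 = 1} \; \max_{\| y \|_2 = 1} y^T A x .
\end{equation*}

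Next I would exploit the elementary transpose identity $y^T A x = (A^T y)^T x = x^T A^T y$, which holds because a $1 \times 1$ scalar equals its own transpose. Substituting this into the double maximum and interchanging the order of the two maxima over the compact unit spheres gives
\begin{equation*}
    \| A \|_2 = \max_{\| y \|_2 = 1} \; \max_{\| x \|_2 = 1} x^T (A^T y) = \max_{\| y \|_2 = 1} \| A^T y \|_2 = \| A^T \|_2 ,
\end{equation*}
where the middle equality is again the auxiliary identity, now applied to $v = A^T y$. This completes the argument.

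I do not anticipate a genuine obstacle, since every step reduces either to the Cauchy-Schwartz inequality already in hand or to the scalar transpose identity. The only points requiring a word of care are the boundary case $A x = 0$ (or $A^T y = 0$) in the auxiliary identity, where the maximizing direction is not unique but both sides are zero so equality still holds, and the interchange of the two maxima, which is permissible because each is taken over a compact set of a continuous function. Neither of these is a real difficulty, so the proof is essentially a clean unwinding of the bilinear characterization of $\| \cdot \|_2$.
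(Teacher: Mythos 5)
Your proof is correct, but it takes a genuinely different route from the paper. The paper proves the corollary via the singular value decomposition: writing $A = U \Sigma V^T$, it observes that $A^T = V \Sigma^T U^T$ is itself an SVD, that $\Sigma$ and $\Sigma^T$ share the same nonzero diagonal entries, and hence that $A$ and $A^T$ have the same largest singular value $\sigma_{\max}$, which the paper takes as the definition of $\left|\left| \cdot \right|\right|_2$. You instead work from the operator-norm characterization $\left|\left| A \right|\right|_2 = \max_{\left|\left| x \right|\right|_2 = 1} \left|\left| A x \right|\right|_2$, unwind $\left|\left| A x \right|\right|_2$ as $\max_{\left|\left| y \right|\right|_2 = 1} y^T A x$ via Cauchy--Schwartz, and exploit the symmetry $y^T A x = x^T A^T y$ together with the harmless interchange of two maxima (which is always valid for iterated suprema, compactness aside). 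Your argument is more elementary and fits the paper's narrative better, since it leans only on Corollary 7, which was just established; the paper's SVD argument requires the existence of the decomposition as an external tool but delivers more, namely that $A$ and $A^T$ share \emph{all} nonzero singular values, not merely the largest. One small point of hygiene: since the paper's proof identifies $\left|\left| A \right|\right|_2$ with $\sigma_{\max}(A)$, your proof implicitly invokes the equivalence of the induced Euclidean operator norm with the largest singular value; this is standard, but if the paper's official definition is the singular-value one, you should say a sentence acknowledging that equivalence so the two definitions are reconciled.
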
 

\begin{proof} 
    \begin{equation*}
        \left| \left| A ^T \right| \right| _2 = \sigma _{max} \left( A ^T \right),
        \left| \left| A \right| \right| _2 = \sigma _{max} \left( A \right) .
    \end{equation*}

    If A has singular value decomposition $ U \Sigma V ^T $, then $ A ^T $ has singular value decomposition
    $ V \Sigma ^T U ^T $. The matrices $ \Sigma $ and $ \Sigma ^T $ will have the same nonzero diagonal elements. Thus $ A $ and $ A ^T $ have the same nonzero singular values.
\end{proof} 

You will find it rather difficult to prove Generalized Cauchy-Schwartz Inequality. In fact it was introduced without any proof in YaXiang Yuan's masterpiece about Optimization \cite{WenYuSun2006}.

\begin{theorem}
    (Generalized Cauchy-Schwartz Inequality) Let A be an $ n \times n $  symmetric and positive deﬁnite matrix. If x and y are vectors in $ R ^ n $ , then the inequality

    \begin{equation*}
        \left| x ^T y \right| \le \left| \left| x \right| \right| _A \left| \left| y \right| \right| _{A ^{-1}}
    \end{equation*}
    holds, the equality holds if and only if x and $ A ^{-1} y $ are linearly dependent, where the vector norm is the ellipsoid norm, which is deﬁned as:

    \begin{equation*}
        \left| \left| v \right| \right| _A = \sqrt{ v ^T A v } .
    \end{equation*}
\end{theorem}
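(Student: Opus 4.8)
The plan is to reduce the statement to the classical Cauchy-Schwartz inequality (the case $p=q=2$ of Theorem 6) by symmetrically factoring the matrix $A$. Since $A$ is symmetric and positive definite, I would first invoke the spectral theorem to write $A = Q\Lambda Q^T$ with $Q$ orthogonal and $\Lambda$ diagonal with strictly positive entries, and then define the symmetric positive definite square root $A^{1/2} = Q\Lambda^{1/2}Q^T$. This matrix satisfies $A^{1/2}A^{1/2} = A$ and $(A^{1/2})^T = A^{1/2}$, and it is invertible with inverse $A^{-1/2} = Q\Lambda^{-1/2}Q^T$, which in turn satisfies $A^{-1/2}A^{-1/2} = A^{-1}$.

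The key algebraic step is the identity
\begin{equation*}
    x^T y = x^T A^{1/2} A^{-1/2} y = \left( A^{1/2} x \right)^T \left( A^{-1/2} y \right),
\end{equation*}
obtained by inserting $I = A^{1/2}A^{-1/2}$ and using the symmetry of $A^{1/2}$. Setting $u = A^{1/2}x$ and $v = A^{-1/2}y$ and applying the classical Cauchy-Schwartz inequality to $u$ and $v$ gives $\left| x^T y \right| = \left| u^T v \right| \le \|u\|_2 \|v\|_2$. It then remains to recompute the two Euclidean norms: using symmetry of $A^{1/2}$ and $A^{-1/2}$,
\begin{equation*}
    \|u\|_2^2 = x^T A^{1/2} A^{1/2} x = x^T A x = \|x\|_A^2, \qquad \|v\|_2^2 = y^T A^{-1/2} A^{-1/2} y = y^T A^{-1} y = \|y\|_{A^{-1}}^2,
\end{equation*}
which yields $\left| x^T y \right| \le \|x\|_A \|y\|_{A^{-1}}$ at once.

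For the equality case I would carry the equality condition of the classical inequality through the factorization. Equality holds exactly when $u = A^{1/2}x$ and $v = A^{-1/2}y$ are linearly dependent. Since $A^{-1/2}$ is an invertible linear map, applying it to a dependence relation $\alpha u + \beta v = 0$ gives $\alpha x + \beta A^{-1} y = 0$ (and conversely, applying $A^{1/2}$ reverses the step), so linear dependence of $A^{1/2}x$ and $A^{-1/2}y$ is equivalent to linear dependence of $x$ and $A^{-1}y$, which is precisely the claimed condition.

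The main obstacle I anticipate is not the computation, which is routine once the factorization is in hand, but rather justifying the existence and properties of the symmetric square root $A^{1/2}$: this is where positive definiteness is essential, since it guarantees that $\Lambda$ has positive diagonal so that $\Lambda^{1/2}$ and $\Lambda^{-1/2}$ are real, and it is the one place the argument genuinely relies on the spectral theorem. An alternative route that sidesteps the square root is to observe that $\langle u,w\rangle_A := u^T A w$ defines a genuine inner product, to write $x^T y = \langle x, A^{-1}y\rangle_A$, and to apply the abstract Cauchy-Schwartz inequality for this inner product; that version delivers the equality condition (linear dependence of $x$ and $A^{-1}y$) directly, at the cost of first establishing Cauchy-Schwartz in the inner-product setting.
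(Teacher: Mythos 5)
Your proof is correct, and it shares the paper's overall strategy: factor $A$ as $B^T B$, rewrite $x^T y$ as an ordinary dot product of transformed vectors, invoke the classical Cauchy-Schwartz inequality, and pull the equality condition back through the invertible factor. The difference is in how the factor is produced. The paper uses congruence: $A$ positive definite is congruent to the identity, $P^T A P = I$, and setting $B = P^{-1}$ gives $A = B^T B$ with $B$ in general not symmetric, so the paper must track $B$ and $B^{-T}$ as distinct matrices throughout. You instead take $B = A^{1/2}$, the symmetric square root supplied by the spectral theorem; symmetry gives $B^{-T} = B^{-1} = A^{-1/2}$, so the transposes disappear and the norm identities $\left\Vert A^{1/2} x \right\Vert _2 = \left\Vert x \right\Vert _A$ and $\left\Vert A^{-1/2} y \right\Vert _2 = \left\Vert y \right\Vert _{A^{-1}}$ are immediate. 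What each buys: your route costs the spectral theorem, a heavier existence result than congruence to $I$ (which follows from Lagrange reduction or a Cholesky factorization), but it repays that cost with cleaner algebra; the paper's factorization is more elementary but notationally messier. Your treatment of the equality case is in fact slightly more careful than the paper's: you phrase dependence as $\alpha u + \beta v = 0$ and transport it by the invertible maps $A^{1/2}$ and $A^{-1/2}$, obtaining a genuine if-and-only-if, whereas the paper writes $Bx = r B^{-T} y$, which tacitly assumes $Bx$ is a multiple of $B^{-T}y$ and so skips the degenerate case $B^{-T} y = 0$, $Bx \ne 0$. Your closing remark, that one could avoid square roots entirely by noting $\langle u, w \rangle _A = u^T A w$ is an inner product and writing $x^T y = \langle x, A^{-1} y \rangle _A$, is a valid third route, at the price of proving Cauchy-Schwartz in the abstract inner-product setting rather than quoting the Euclidean case.
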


\begin{proof} 
    Because A is positive deﬁnite matrix, A and identity matrix I are said to be congruent, so there is a nondegenerate square matrix $ P $ such that: $ P ^T A P = I $.

    Let $ B = P ^{-1} $, we have $ A = \left( P ^T \right) ^{-1} P ^{-1} = \left( P ^{-1} \right) ^T P ^{-1} = B ^T B $ .
    
    Therefore,
    \begin{equation}
        \left| \left| x \right| \right| _A =  \sqrt {x ^T A x} = \sqrt {x ^T B ^T B x} = \sqrt { \left( B x \right) ^T \left( B x \right) } = \left| \left| B x \right| \right| _2
    \end{equation}

    Because A is positive deﬁnite, A is invertible, $ A ^{-1} = B ^{-1} B ^{-T} $. So
    \begin{align}
        \left| \left| y \right| \right| _{A ^{-1}}  &= \sqrt {y ^T A ^{-1} y}
        = \sqrt {y ^T B ^{-1} B ^{-T} y} \notag \\
        &= \sqrt {\left( B ^{-T} y \right) ^T \left( B ^{-T} y \right)} 
        = \left| \left| B ^{-T} y \right| \right|  _2
    \end{align}
    \begin{align}
        \left| x ^T y \right| &= \left| x ^T A A ^{-1} y\right| = \left| x ^T \left( B ^T B \right) \left( B ^{-1} B ^{-T} \right) y \right| \notag \\
        &= \left| \left( B x \right) ^T B B ^{-1} \left( B ^{-T} y \right) \right| = \left| \left( B x \right) ^T \left(B ^{-T} y \right) \right| \notag \\
        &\le \left| \left| B x \right| \right| _2 \left| \left| B ^{-T} y \right| \right| _2
    \end{align}

    It follows immediately from (2.8) and (2.9) that
    \begin{equation*}
        \left| x ^T y \right| \le \left| \left| x \right| \right| _A \left| \left| y \right| \right| _{A ^{-1}} 
    \end{equation*}

    Following Classical Cauchy-Schwartz Inequality, the equality of (2.10) holds if and only if $ B x $ and $ B ^{-T} y $ are linearly dependent. Let $ B x = r B ^{-T} y $, where r is a constant.
    \begin{equation*}
        x = r B ^{-1} B ^{-T} y = r \left( B ^T B \right) ^{-1} y = r A ^{-1} y
    \end{equation*}
\end{proof}

\begin{corollary}
    The Cauchy-Schwartz Inequality follows from Theorem 9 by setting $ A = I $, where $ I $ is an identity matrix.
\end{corollary}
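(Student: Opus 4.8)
The plan is to obtain the Classical Cauchy-Schwartz Inequality as a direct specialization of Theorem 9, so the only real work is to verify that the identity matrix $I$ is an admissible choice of $A$ and that the two ellipsoid norms collapse to the Euclidean norm $\|\cdot\|_2$. First I would check the hypothesis of Theorem 9: the $n \times n$ identity matrix $I$ is symmetric since $I^T = I$, and it is positive definite since $v^T I v = v^T v = \|v\|_2^2 > 0$ for every nonzero $v \in R^n$. Hence Theorem 9 applies with the choice $A = I$.

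Next I would compute the two norms appearing on the right-hand side of the conclusion of Theorem 9. Using the definition $\|v\|_A = \sqrt{v^T A v}$ with $A = I$ gives
\begin{equation*}
    \|x\|_I = \sqrt{x^T I x} = \sqrt{x^T x} = \|x\|_2 .
\end{equation*}
Because $I^{-1} = I$, the same computation yields
\begin{equation*}
    \|y\|_{I^{-1}} = \|y\|_I = \sqrt{y^T y} = \|y\|_2 .
\end{equation*}

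Substituting $A = I$ into the inequality $|x^T y| \le \|x\|_A \|y\|_{A^{-1}}$ of Theorem 9 and using the two identities above immediately gives $|x^T y| \le \|x\|_2 \|y\|_2$, which is exactly the Classical Cauchy-Schwartz Inequality stated in Section 1. For the equality case, the criterion in Theorem 9 is that $x$ and $A^{-1} y$ be linearly dependent; with $A = I$ this reduces to the statement that $x$ and $y$ are linearly dependent, recovering the classical equality condition. The main point to be careful about is simply confirming that $I$ meets the symmetry and positive-definiteness requirements of Theorem 9 and that $I^{-1} = I$; beyond these routine checks the corollary is immediate, so I do not expect any substantial obstacle.
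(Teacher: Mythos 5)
Your proposal is correct and follows exactly the route the paper intends: the paper states this corollary without proof precisely because it is the immediate specialization $A = I$ of Theorem 9, and you have simply spelled out the routine checks (symmetry and positive definiteness of $I$, the identities $\|x\|_I = \|x\|_2$ and $\|y\|_{I^{-1}} = \|y\|_2$, and the equality case). Nothing is missing, and no genuinely different idea is involved.
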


\section{Find the steepest descent direction on the unit sphere under the ellipsoid norm}
\label{S3}
\setcounter{equation}{0}
\setcounter{table}{0}

We will find the solution of the minimization problem  for constrained optimization:
\begin{equation}
    min \quad g _k ^T d 
\end{equation}
\begin{equation*}
    s.t. \quad \left| \left| d \right| \right| _{B _k} = 1
\end{equation*}

where $ g _k = \nabla f \left( x _k \right) $, $ B _k $ is the approximation of the Hessian $ G _k $. They are known in the problem. $ d $ is a decision variable. We will constrain it on the unit sphere under the ellipsoid norm.

It follows from Generalized Cauchy-Schwartz Inequality that:
\begin{equation}
    \left| g _k ^T d \right| \le \left( d ^T B _k d \right) ^{\frac 1 2} \left( g _k ^T B _k ^{-1} g _k \right) ^{\frac 1 2}
\end{equation}
, the equality holds if and only if $ d $ and $ B _k ^{-1} g _k $  are linearly dependent. 

Therefore, the minimization of $g _k ^T d $ subject to constraints on its variables is:
\begin{equation*}
    - \left( d ^T B _k d \right) ^{\frac 1 2} \left( g _k ^T B _k ^{-1} g _k \right) ^{\frac 1 2} = -\left( g _k ^T B _k ^{-1} g _k \right) ^{\frac 1 2}
\end{equation*}
, which is a constant independent of decision variable $ d $.

So $ g _k ^T d $  is negative. And $ B _k $ is positive deﬁnite, so is $ B _k ^{-1} $.

\begin{equation*}
    d = r B _k ^{-1} g _k , \quad g _k ^T d = r \left( g _k ^T B _k ^{-1} g _k \right) 
\end{equation*}
. hence $ r $ has to be negative, while $ g _k ^T d $ is negative.

Hence direction $ d = - B _k ^{-1} g _k $. Because of $ \left| \left| d \right| \right| _{B _k} =1 $, $ d $ has to be a unit vector in the direction of $ - B _k ^{-1} g _k $:
\begin{align}
    d = \frac {- B _k ^{-1} g _k} {\left| \left| - B _k ^{-1} g _k \right| \right| _{B _k}} 
    &= \frac {- B _k ^{-1} g _k} {\left[ \left( B _k ^{-1} g _k \right) ^T B _k \left( B _k ^{-1} g _k \right) \right] ^{\frac 1 2}} \notag \\
    &=\frac {- B _k ^{-1} g _k} {\left[ g _k ^T \left( B _k ^{-1} \right) ^T B _k B _k ^{-1} g _k \right] ^{\frac 1 2}}
\end{align}

$ B _k $ is symmetric positive deﬁnite, so $ \left( B _k ^{-1} \right) ^T = \left( B _k ^T \right) ^{-1} = B _k ^{-1} $, therefore
\begin{equation}
    d = \frac {- B _k ^{-1} g _k} {\left( g _k ^T B _k ^{-1} g _k \right) ^{\frac 1 2}}
\end{equation}

It is the solution of the minimization problem for constrained optimization (3.1). So Quasi-Newton method is a steepest descent method under the ellipsoid norm.

\section{Conclusions}
\label{S4}
\setcounter{equation}{0}
\setcounter{table}{0}

    Quasi-Newton search directions provide an attractive alternative to Newton's method in that they do not require computation of the Hessian and yet still attain a superlinear rate of convergence. In Quasi-Newton method,  we require Hessian approximation to satisfy the secant equation.  Although Optimization software libraries in deep learning, contain a variety of quasi-Newton algorithms, most people are usually less familiar with the underlying mathematical theory.
    In this paper,  the Classical Cauchy-Schwartz Inequality is introduced, then more generalization are proposed. And it is seriously proved that  Quasi-Newton method is a steepest descent method under the ellipsoid norm.
    

\bibliographystyle{elsarticle-num}
\bibliography{}

\begin{thebibliography}{}

\bibitem{Philipp2016} Philipp K. Janert. Gnuplot in Action - Understanding Data with Graphs, 2nd Ed., Manning Publications Co., 2016, P42
\bibitem{WenYuSun2006} WenYu Sun, YaXiang Yuan. Optimization Theory and Methods - Nonlinear Programming, Springer 2006.

\end{thebibliography}



\end{document}